\newtheorem{theorem}{Theorem}[section]
\newtheorem{lemma}[theorem]{Lemma}
\newtheorem{assumption}[theorem]{Assumption}
\newtheorem{remark}[theorem]{Remark}
\newcommand{\Exp}{\operatornamewithlimits{Exp}}
\newcommand{\Geom}{\operatornamewithlimits{Geom}}
\numberwithin{equation}{section}
\title[Multiple Entries with Forced Exits]{A class of Solvable Multiple Entry Problems with Forced Exits}
\author{Jukka Lempa}
\address{Jukka Lempa, Department of Mathematics and Statistics, University of Turku, FI - 20014 Turun Yliopisto, Finland, \texttt{jumile@utu.fi}}
\date{\today, \currenttime}
\begin{document}

\begin{abstract}
We study an optimal investment problem with multiple entries and forced exits. A closed form solution of the optimisation problem is presented for general underlying diffusion dynamics and a general running payoff function in the case when forced exits occur on the jump times of a Poisson process. Furthermore, we allow the investment opportunity to be subject to the risk of a catastrophe that can occur at the jumps of the Poisson process. More precisely, we attach IID Bernoulli trials to the jump times and if the trial fails, no further re-entries are allowed. We show in the general case that the optimal investment threshold is independent of the success probability is the Bernoulli trials. The results are illustrated with explicit examples.   
\end{abstract}

\maketitle

\section{Introduction}

We consider an investment model in continuous time where the decision maker has the option to invest in a given project yielding uncertain returns $X$. The investors objective is to choose the entry time such that a particular objective functional (often of either discounted or ergodic type) is maximised. At the time of the entry, a known fixed lump sum $K$ of entry costs must be paid. Once the entry is made, the investment incurs a known constant instantaneous running cost $C$. In the classical perpetual version of this problem, see, e.g., \cite{Dixit, DP}, it is assumed that once the entry is made, the return from the investment will accrue from the investment date to infinity. A variant of this problem includes the possibility of voluntary exits, see, e.g., \cite{BZ, DZ, Zervos}. The problem becomes then of sequential nature, where a sequence of entry and exit times is determined such that the objective functional is maximised.

The purpose of this paper is to study a class of multiple entry problems where the exits are not voluntary but forced. This type of problem was first studied in \cite{Wang2005} and can be informally described as follows. Subject to return uncertainty modelled by a time homogeneous diffusion process $X$, and known entry and running costs $K$ and $C$, the investor chooses the time of entry. However, the investment is subject to exogenous interventions, which will terminate the flow of returns. These interventions occur uniformly over time and are modelled by the jumps of a Poisson process $N$ independent of $X$. Once the investment is terminated, the decision maker can make a new entry. The objective is then to maximise the expected present value of the total revenue from the investment. As was discussed already in \cite{Wang2005}, this setting lends itself to a possible application to a so-called liquidation risk \cite{DR}. Consider the case where the investor is funding the investment with borrowed money. To increase liquidity on the lenders side, it is possible that the lender is given (or requires) the right to seize the asset and put it to alternative use, that is, liquidate. Thus the intervention would be forced liquidation from the lenders side. After a forced liquidation, the investor can find a new lender to make a new entry.

This paper makes two contributions. First, we allow the underlying stochastic process $X$ to follow a general one-dimensional diffusion process with natural boundaries. In comparison to \cite{Wang2005}, where the case of geometric Brownian motion is considered, this is a substantial generalisation which has not, to our best knowledge, been studied earlier in the literature. Furthermore, we introduce a so-called catastrophe risk in the model as follows: we attach IID Bernoulli trials to the jump times of $N$ and if the trial fails, no further re-entries are allowed. In the liquidation risk application described above, the catastrophe event describes a fundamental change in the economical environment of the investment opportunity such that all lenders lose interest in financing a new entry. Such a change could be, for instance, due to the emergence of a new technology. This is again a substantial generalisation and it effectively means that the number of forced exits up to the catastrophe is geometrically distributed. Somewhat remarkably we find that the optimal investment threshold is independent of the success probability of the Bernoulli trials. 

The paper is organised as follows. In Section 2 we set up the probabilistic framework. In Section 3 the entry problem is introduced. The solution is derived in Section 4. The paper is concluded in Section 5 with illustrative examples.

\section{The Probabilistic Setting}

We set up the probabilistic framework for the entry problem, for details, see \cite{BoSa}. Let $(\Omega,\mathcal{F},\mathbb{F},\mathbf{P})$, where $\mathbb{F}=\{\mathcal{F}_t\}_{t\geq0}$, be a complete filtered probability space satisfying the usual conditions. Assume that the filtration is rich enough to carry the underlying state process $X=(X_t,\mathcal{F}_t)$ and a Poisson process $N=(N_t,\mathcal{F}_t)$. We assume that the process $N$ has jump intensity $\lambda$ and that it is independent of $X$. The process $X$ is a linear diffusion evolving on $\mathbf{R}_+$ with the infinitesimal generator $\mathcal{A}=\frac{1}{2}\beta^2(x)\frac{d^2}{dx^2}+\alpha(x)\frac{d}{dx}$ and initial state $x$. In what follows, we assume that the functions $\alpha$ and $\beta$ are continuous and that the process does not die inside the state space. The densities of the speed measure $m$ and the scale function $S$ of $X$ are defined, respectively, as $m'(x)=\frac{2}{\beta^2(x)}e^{B(x)}$ and  $S'(x)= e^{-B(x)}$ for all $x \in \mathbf{R}_+$, where $B(x):=\int^x \frac{2\alpha(y)}{\beta^2(y)}dy$.

We denote as, respectively, $\psi_r>0$ and $\varphi_r>0$ the increasing and decreasing solution of the ODE $\mathcal{A}u=ru$, where $r>0$, defined on the domain of the characteristic operator of $X$. By posing appropriate boundary conditions depending on the boundary classification of $X$, the functions $\psi_r$ and $\varphi_r$ are defined uniquely up to a multiplicative constant and can be identified as the minimal $r$-excessive functions. To impose the boundary conditions, we assume, using the terminology of \cite{BoSa}, that the boundaries $0$ and $\infty$ are natural. This means that 
\begin{align}\label{boundary} 
\lim_{x\rightarrow 0+} \psi_r(x) = \lim_{x\rightarrow 0+} \frac{\psi_r'(x)}{S'(x)} &= 0, \ \lim_{x\rightarrow \infty} \psi_r(x) = \lim_{x\rightarrow \infty} \frac{\psi_r'(x)}{S'(x)} = \infty, \\
\lim_{x\rightarrow 0+} \varphi_r(x) = \lim_{x\rightarrow 0+} \frac{\varphi_r'(x)}{S'(x)} &= \infty, \ \lim_{x\rightarrow \infty} \varphi_r(x) = \lim_{x\rightarrow \infty} \frac{\varphi_r'(x)}{S'(x)} = 0.  \nonumber
\end{align}  
Finally, we denote as $\mathbf{P}_x$ the probability measure $\mathbf{P}$ conditioned on the initial state $x$ and as $\mathbf{E}_x$ the expectation with respect to $\mathbf{P}_x$.

For $r>0$, we denote as $L^r_1$ the class of real valued functions on $\mathbf{R}_+$ satisfying the condition $\mathbf{E}_x\left[ \int_0^\infty e^{-rt}|f(X_t)|dt \right]<\infty$. For a function $f \in L^r_1$, the resolvent $R_r f$ is defined as
\[ (R_r f)(x)= \mathbf{E}_x\left[ \int_0^\infty e^{-rt}f(X_t)dt \right]. \]
It is well known, see, e.g., \cite{BoSa}, that for the considered class of diffusion processes, the resolvent operator can be expressed as 
\begin{align}\label{Resolvent}
(R_r h)(x) = B_r^{-1}\left( \varphi_r(x)\int_0^x \psi_r(z)h(z)m'(z)dz + \psi_r(x)\int_x^\infty \varphi_r(z)h(z)m'(z)dz  \right).
\end{align}
where $B_r = \frac{\psi'_r(x)}{S'(x)}\varphi_r(x) - \frac{\varphi'_r(x)}{S'(x)}\psi_r(x) $ is the Wronskian determinant. We also point out that the resolvent operator satisfies the following resolvent equation
\[ R_q - R_p + (q-p)R_qR_p = 0, \]
for all $q,p>0$.
\begin{remark}\label{Resolvent Remark}
Let $h \in L_1^r$. For an arbitrary $y>0$, define the functions $\check{h}_y$ and $\hat{h}_y$ as
\begin{align*}
\check{h}_y(x) = h(x)\mathbf{1}_{\{x<y\}}, \ \hat{h}_y(x) = h(x)\mathbf{1}_{\{x\geq y\}},
\end{align*}
for all $x \in (0,\infty)$. Then, obviously, $\check{h}_y, \hat{h}_y \in L_1^r$ and $h = \check{h}_y + \hat{h}_y$ for all $y \in (0,\infty)$. Furthermore, 
\begin{displaymath}
(R_r \check{h}_y)(x) =
\begin{cases}
B_r^{-1}\left(\varphi_r(x)\int_0^x \psi_r(z)h(z)m'(z)dz + \psi_r(x)\int_x^y \varphi_r(z)h(z)m'(z)dz \right), & x < y, \\
B_r^{-1}\varphi_r(x)\int_0^y \psi_r(z)h(z)m'(z)dz, & x \geq y,
\end{cases} 
\end{displaymath}
and
\begin{displaymath}
(R_r \hat{h}_y)(x) =
\begin{cases}
B_r^{-1}\psi_r(x)\int_y^\infty \varphi_r(z)h(z)m'(z)dz, & x < y, \\
B_r^{-1}\left(\varphi_r(x)\int_y^x \psi_r(z)h(z)m'(z)dz + \psi_r(x)\int_x^\infty \varphi_r(z)h(z)m'(z)dz \right), & x \geq y.
\end{cases} 
\end{displaymath}
\end{remark}
The following auxiliary result can be readily verified using the representation \eqref{Resolvent} and the conditions \eqref{boundary}.
\begin{lemma}\label{Lemma001}
Let $r>0$ and $h \in L_1^{r}$. Then 
\begin{align*}
(R_{r}h)'(x) - (R_{r}h)(x)\frac{\psi_{r}'(x)}{\psi_{r}(x)} &= - \frac{S'(x)}{\psi_r(x)}\int_0^x \psi_r(z)h(z)m'(z)dz, \\
\frac{\varphi_r'(x)}{S'(x)}(R_rh)(x) - \frac{(R_rh)'(x)}{S'(x)}\varphi_r(x) &= - \int_x^\infty \varphi_r(z) h(z) m'(z) dz, \\
\frac{\psi_r'(x)}{S'(x)}(R_rh)(x) - \frac{(R_rh)'(x)}{S'(x)}\psi_r(x) &=  \int_0^x \psi_r(z) h(z) m'(z) dz. \\
\end{align*}

\end{lemma}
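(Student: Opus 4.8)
The plan is to establish all three identities by direct differentiation of the resolvent representation \eqref{Resolvent}, exploiting the fact that the Wronskian $B_r$ is independent of $x$. First I would abbreviate the two integrals appearing in \eqref{Resolvent} by writing $I_1(x) = \int_0^x \psi_r(z)h(z)m'(z)dz$ and $I_2(x) = \int_x^\infty \varphi_r(z)h(z)m'(z)dz$, so that $B_r (R_rh)(x) = \varphi_r(x)I_1(x) + \psi_r(x)I_2(x)$. The hypothesis $h\in L_1^r$, together with the validity of \eqref{Resolvent} under the boundary classification \eqref{boundary}, guarantees that $I_1(x)$ and $I_2(x)$ are finite at every interior point, so these expressions are well defined and differentiable in $x$.

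Next I would differentiate. Since $I_1'(x) = \psi_r(x)h(x)m'(x)$ and $I_2'(x) = -\varphi_r(x)h(x)m'(x)$, the two terms carrying the common factor $h(x)m'(x)$ cancel, leaving the clean expression $B_r (R_rh)'(x) = \varphi_r'(x)I_1(x) + \psi_r'(x)I_2(x)$. This cancellation is the key simplification, and it is what reduces the three claims to essentially algebraic manipulations.

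The remaining step is to form the three prescribed linear combinations of $(R_rh)(x)$ and $(R_rh)'(x)$ and collect terms. In each case exactly one of $I_1$ or $I_2$ survives, multiplied by a combination of $\psi_r$, $\varphi_r$ and their derivatives that reduces to the Wronskian. Using the defining relation $\psi_r'(x)\varphi_r(x) - \varphi_r'(x)\psi_r(x) = S'(x)B_r$, one finds for instance $\frac{\psi_r'(x)}{S'(x)}(R_rh)(x) - \frac{(R_rh)'(x)}{S'(x)}\psi_r(x) = B_r^{-1}I_1(x)\,\frac{\psi_r'(x)\varphi_r(x) - \varphi_r'(x)\psi_r(x)}{S'(x)} = I_1(x)$, which is the third identity. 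The first identity follows by the same route after multiplying $(R_rh)$ by $\psi_r'/\psi_r$, so that the $I_2$ contributions cancel and only $I_1$ remains; the second is identical in form but with $I_2$ surviving and an overall sign change coming from $\varphi_r'\psi_r - \varphi_r\psi_r' = -S'(x)B_r$.

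Since every step after the differentiation is a routine collection of terms, I do not anticipate a genuine obstacle. The one point requiring care is the legitimacy of treating $B_r$ as a constant and of differentiating $I_1$ and $I_2$ by the fundamental theorem of calculus, which is where the finiteness afforded by $h\in L_1^r$ and the representation \eqref{Resolvent} enter.
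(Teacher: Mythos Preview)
Your proposal is correct and follows exactly the route the paper indicates: the paper does not spell out a proof but simply states that the lemma ``can be readily verified using the representation \eqref{Resolvent} and the conditions \eqref{boundary},'' which is precisely the direct differentiation-and-Wronskian argument you describe. The key cancellation you identify in $(R_rh)'$ and the use of $\psi_r'\varphi_r - \varphi_r'\psi_r = S'(x)B_r$ are the only substantive ingredients, and you have handled them correctly.
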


\section{The Entry Problem}

We are now in position to state our entry problem. Let $(T_j)$ denote the jump times of the Poisson process $N$ with the convention $T_0=0$. Denote as $\bar{\tau}$ an arbitrary sequence of stopping times $(\tau_j)$ taking values in $[0,\infty]$ satisfying the constraint $\tau_j \leq \sigma_j \leq \tau_{j+1}$ for all $j$, where $\sigma_j = \inf \{ T_k \ : \ \tau_j < T_k  \}$. Let $h$ be a measurable function, $C$ and $K$ two non-negative constants, and $r>0$ the constant rate of discounting. Motivated by the discussion in the introduction, consider the following multiple entry problems:
\begin{align}\label{idle problem}
V_i(x) = \sup_{\bar{\tau}} \mathbf{E}_x\left[ \sum_{j=1}^M \left(\int_{\tau_j}^{\sigma_j} e^{-rt}(h(X_t)-C)dt-e^{-r\tau_j}K\right) \right]
\end{align}
and
\begin{align}\label{active problem}
V_a(x) = \sup_{\bar{\tau}} \mathbf{E}_x\left[\int_0^{T_1} e^{-rt}(h(X_t)-C)dt + \sum_{j=1}^M \left(\int_{\tau_j}^{\sigma_j} e^{-rt}(h(X_t)-C)dt-e^{-r\tau_j}K \right)\right]. 
\end{align}
where $M\sim \Geom(p)$ and independent of $X$. Here, the random variable $M$ is realized as follows: we attach an independent Bernoulli trial with success probability $p$ to each jump time of $N$. When the first failure occurs, the re-entry possibility expires permanently. This problem was studied in \cite{Wang2005} in the case where the diffusion $X$ is a geometric Brownian motion and no catastrophes occur, i.e., $p=1$. 

We study the problems \eqref{idle problem} and \eqref{active problem} under the following assumptions.

\begin{assumption}\label{SA}
Let the function $h$ be $L^1_r$, continuous, non-decreasing, non-constant and satisfy $h(0)=0$. Furthermore, let at least one of the constants $C$ and $K$ be strictly positive.    
\end{assumption}

It is helpful to write the value functions \eqref{idle problem} and \eqref{active problem} as infinite sums instead of sums of random length. To this end, we use the well-known thinning procedure of Poisson processes, see, e.g., \cite{Serfozo}. As we have labeled the jump times of $N$ with outcomes of independent Bernoulli trials, we can split the process $N$ into two independent Poisson processes $N^0$ and $N^1$ corresponding to outcomes $0$ and $1$, respectively. Moreover, the intensities of $N^0$ and $N^1$ are $\lambda(1-p)$ and $\lambda p$, respectively. Now, denote the jump times of $N^1$ as $T_j^1$. These jump times correspond to events where further entries are still possible. On the other hand, the first jump of $N^0$ will terminate the whole investment opportunity. Denote this jump time as $T^0_1$

Denote as $\bar{\tau}^1$ an arbitrary sequence of stopping times $(\tau_j)$ taking values in $[0,\infty]$ satisfying the constraint $\tau_j \leq \sigma^1_j \leq \tau_{j+1}$ for all $j$, where $\sigma^1_j = \inf \{ T^1_k \ : \ \tau_j < T^1_k  \}$. Then we have the following lemma.

\begin{lemma}\label{MainLemma}
The value functions \eqref{idle problem} and \eqref{active problem} can be expressed as
\begin{align}\label{idle problem rev}
V_i(x) = \sup_{\bar{\tau}^1} \mathbf{E}_x\left[ \sum_{j=1}^\infty \left(\int_{\tau_j}^{\sigma_j^1} e^{-(r+(1-p)\lambda)t}(h(X_t)-C)dt-e^{-(r+(1-p)\lambda)\tau_j}K\right)\right]
\end{align}
and 
\begin{align}\label{active problem rev}
V_a(x) = \sup_{\bar{\tau}^1} \mathbf{E}_x&\left[\int_0^{T^1_1} e^{-(r+(1-p)\lambda)t}(h(X_t)-C)dt \right. \\&+ \left.\sum_{j=1}^\infty \left(\int_{\tau_j}^{\sigma^1_j} e^{-(r+(1-p)\lambda)t}(h(X_t)-C)dt-e^{-(r+(1-p)\lambda)\tau_j}K \right)\right] \nonumber. 
\end{align}
\end{lemma}

\begin{proof}
Since the variable $T^0_1 \sim \Exp((1-p)\lambda)$ and it is independent of $X$, we can use the additive functional $A_t = (1-p)\lambda t$ to impose an additional killing rate of $dA_t = (1-p)\lambda$ on the process $X$, for details, see, e.g., \cite{BoSa}. This results in the total killing rate of $r+(1-p)\lambda$. 
\end{proof}

\begin{remark}
The previous lemma shows that the jumps of the process $N^1$ are the forced exit times for which further re-entry opportunities remain. To simplify the notation, we denote from now on the jump times of $N^1$ simply as $T_j$ for which $T_{j+1} - T_{j} \sim \Exp(\lambda p)$ and the stopping times $\sigma^1_j$ as $\sigma_j$ for all $j\geq 1$.  
\end{remark}

\begin{remark}
We point out following \cite{Wang2005} that the optimization problems \eqref{idle problem rev} and \eqref{active problem rev} are well defined. For every sequence $\bar{\tau}$ of stopping times, we have 
\begin{align*}
\sum_{j=1}^\infty &\left| \int_{\tau_j}^{\sigma_j} e^{-(r+(1-p)\lambda)t}(h(X_t)-C)dt-e^{-(r+(1-p)\lambda)\tau_j}K \right| \\
& \leq \int_0^\infty (h(X_t)+c)dt + \sum_{j=1}^\infty e^{-(r+(1-p)\lambda)\tau_j}K.
\end{align*} 
\end{remark}
Since $\tau_j \geq T_{j-1}$, we find that 
\begin{align*}
\sum_{j=1}^\infty e^{-(r+(1-p)\lambda)\tau_j}K \leq K\sum_{j=1}^\infty e^{-(r+(1-p)\lambda)T^1_{j-1}} = K\sum_{j=1}^\infty \left(\frac{\lambda p}{r+\lambda}\right)^{j-1}= K \frac{r+\lambda}{r+(1-p)\lambda}.
\end{align*} 

\begin{remark}
We observe from Lemma \ref{MainLemma} that as the parameter $p$ approaches zero, the total killing rate $r+(1-p)\lambda$ increases which, in turn, decreases the value. Thus we can say that generally under our assumptions, increased catastrophe risk (as measured by decreasing $p$) decreases the values $G_i$ and $G_a$. 
\end{remark}

\section{The Solution}

\subsection{Necessary Conditions}

We derive the candidate solutions $G_i$ and $G_a$ in a partly heuristic way. We start by making the working assumption that the continuation region for both active and idle problem is the interval $(0,x^*)$ for some threshold $x^*$. Consider first the idle problem and assume that $x<x^*$. Then it is optimal to wait. On an infinitesimal time period $dt$, the process $N$ will jump with probability $\lambda dt$. Given there is a jump, success in the Bernoulli experiment implies that further entry possibilities remain. By the memoryless property of geometric distribution, the value of the remaining entry possibilities is $G_i(x)$. On the other hand, if the Bernoulli experiment fails, there is no further entry possibilities and the remaining value is zero. Based on these observations, we expect that the function $G_i$ satisfies the following partly heuristic computation
\begin{align}\label{G_i lower}
G_i(x)	&= \lambda dt (pG_i(x) + (1-p)\cdot 0) + (1-\lambda dt) \mathbf{E}_x\left[ e^{-r dt} G_i(X_{dt}) \right] \nonumber \\ 
				&=\lambda pG_i(x)dt + (1-\lambda dt)(G_i(x) + (\mathcal{A}-r)G_i(x)dt) \\
				&= G_i(x) + \left[(\mathcal{A}-r)G_i(x) + \lambda pG_i(x) - \lambda G_i(x) \right]dt, \nonumber
\end{align}
here, we have neglected the $dt^2$-term. Thus we expect that the candidate $G_i$ satisfies
\[ (\mathcal{A}- (r+(1-p)\lambda))G_i(x)=0, \]
for all $x\in(0,x^*)$. When $x\geq x^*$, the it is optimal to make an entry, that is, to pay the cost $K$ and become active. Therefore, we expect that
\[ G_i(x) = G_a(x) - K, \]
for all $x \in [x^*,\infty)$.

Consider now the active problem. Now the decision maker can do nothing before the first forced exit. Let $x>0$. By reasoning similarly as in \eqref{G_i lower}, we expect that the function $G_a$ satisfies the following, again, partly heuristic computation
\begin{align*} 
G_a(x) 	&= (h(x)-C)dt + \lambda dt (pG_i(x) + (1-p)\cdot 0) + (1-\lambda dt) \mathbf{E}_x\left[ e^{-r dt} G_a(X_{dt}) \right] \\
				&= G_a(x) + \left[ h(x)-C + (\mathcal{A}-r)G_a(x) +\lambda(pG_i(x)-G_a(x)) \right]dt.
\end{align*}
Therefore, we expect that the candidate $G_a$ satisfies
\begin{align*}
(\mathcal{A}-r)G_a(x)+\lambda(pG_i(x)-G_a(x))+h(x)-C=0,
\end{align*}
for all $x>0$. Summarizing, the task is now to solve the following coupled free boundary problem: Find continuously differentiable functions $G_i$ and $G_a$ and threshold $x^*>0$ such that
\begin{equation}\label{FBB}
\begin{cases}
(\mathcal{A}-(r+(1-p)\lambda)G_i(x)=0, & x \leq x^*, \\
G_i(x) = G_a(x)-K, & x \geq x^*, \\ 
(\mathcal{A}-r)G_a(x)+\lambda(pG_i(x)-G_a(x))+h(x)-C=0, & x > 0.
\end{cases}
\end{equation}
By the definitions \eqref{idle problem} and \eqref{active problem}, we also expect that the functions $G_i$ and $G_a$ satisfy the growth condition
\begin{align}\label{Growth} 
G_i(x) \leq (R_{r+(1-p)\lambda} h)(x), \quad G_a(x) \leq (R_{r+\lambda}h)(x) + (R_{r+(1-p)\lambda}h)(x), 
\end{align}
for all $x \in (0,\infty)$. 
\begin{remark}
In \cite{Wang2005} it is required that $G_a\in C^2$. This additional smoothness requirement is not necessary for the results to hold as we will see in the next section.
\end{remark}
Since we are looking for a function $G_i$ that is bounded in the origin, we find from the first equation of \eqref{FBB} that
$G_i(x) = c_{i,1} \psi_{r+(1-p)\lambda}(x)$ for all $x\leq x^*$, where $c_{i,1}$ is a constant. Using this, we can rewrite the third equation of \eqref{FBB} as
\[ (\mathcal{A}-(r+\lambda))G_a(x)=-(\lambda p c_{i,1}\psi_{r+(1-p)\lambda}(x)+h(x)-C), \] 
for all $x\leq x^*$. Solutions to this ODE can be expressed as
\[ (R_{r+\lambda}h)(x)-\frac{C}{r+\lambda}+\lambda p c_{i,1} (R_{r+\lambda}\psi_{r+(1-p)\lambda})(x) + c_{a,1}\psi_{r+\lambda}(x) + c_{a,2}\varphi_{r+\lambda}(x).  \]
Again, we are looking for a solution that is bounded in the origin, so we find that $c_{a,2}=0$. Since the boundary $\infty$ is natural for the diffusion $X$, we can use \cite[Lemma 2.1]{Lempa2012} to rewrite the solution as
\[ (R_{r+\lambda}h)(x)-\frac{C}{r+\lambda}+c_{i,1}\psi_{r+(1-p)\lambda}(x) + c_{a,1}\psi_{r+\lambda}(x),   \]
for all $x\leq x^*$.

Next, we study the solutions on the interval $(x^*,\infty)$. Using the two last equations in \eqref{FBB}, we find that
\[ (\mathcal{A}-(r+(1-p)\lambda))G_i(x)-K(r+\lambda)+h(x)-C=0 \]
for all $x\geq x^*$. The solutions to this ODE can be written as
\[ (R_{r+(1-p)\lambda}h)(x) -\frac{C+K(r+\lambda)}{r+(1-p)\lambda}+d_{i,1}\psi_{r+(1-p)\lambda}(x)+d_{i,2}\varphi_{r+(1-p)\lambda}(x). \]
The growth condition \eqref{Growth} implies that $d_{i,1}=0$. Furthermore, we observe that
\[ (R_{r+(1-p)\lambda}h)(x)-\frac{C+K(r+\lambda)}{{r+(1-p)\lambda}} = (R_{r+(1-p)\lambda} h_C)(x), \]
where $h_C(x)= h(x) - (C+K(r+\lambda))$ for all $x \in (0,\infty)$. Summarising, we have the following candidate solutions
\begin{equation}\label{Candidate}
\begin{split}
G_i(x) &=
\begin{cases}
c_{i,1} \psi_{r+(1-p)\lambda}(x), & x < x^*, \\
(R_{r+(1-p)\lambda}h_C)(x)+d_{i,2}\varphi_{r+(1-p)\lambda}(x), & x \geq x^*,
\end{cases} \\
G_a(x) &=  
\begin{cases}
(R_{r+\lambda}h)(x)-\frac{C}{r+\lambda}+c_{i,1}\psi_{r+(1-p)\lambda}(x) + c_{a,1}\psi_{r+\lambda}(x), & x < x^*, \\
(R_{r+(1-p)\lambda}h)(x) -\frac{C+K p \lambda}{r+(1-p)\lambda}+d_{i,2}\varphi_{r+(1-p)\lambda}(x), & x \geq x^*.
\end{cases}
\end{split}
\end{equation}

To determine the unknown constants $c_{i,1}$, $c_{a,1}$, $d_{i,2}$, and the threshold $x^*$, we use value-matching and smooth-pasting conditions. First, since the candidate value $G_i$ and its derivative are continuous over $x^*$, we find that the conditions 
\begin{equation*}
\begin{cases}
c_{i,1} \psi_{r+(1-p)\lambda}(x^*) - d_{i,2}\varphi_{r+(1-p)\lambda}(x^*) &= (R_{r+(1-p)\lambda}h_C)(x^*), \\
c_{i,1} \psi_{r+(1-p)\lambda}'(x^*) - d_{i,2}\varphi_{r+(1-p)\lambda}'(x^*) &= (R_{r+(1-p)\lambda}h_C)'(x^*).
\end{cases}
\end{equation*}
must hold. This implies that
\begin{align}\label{cdConst}
c_{i,1} &= B_{r+(1-p)\lambda}^{-1}\left( \frac{(R_{r+(1-p)\lambda} h_C)'(x^*)}{S'(x^*)}\varphi_{r+(1-p)\lambda}(x^*) -  \frac{\varphi_{r+(1-p)\lambda}'(x^*)}{S'(x^*)}(R_{r+(1-p)\lambda} h_C)(x^*) \right), \\
d_{i,2} &= -B_{r+(1-p)\lambda}^{-1}\left( \frac{\psi_{r+(1-p)\lambda}'(x^*)}{S'(x^*)}(R_{r+(1-p)\lambda} h_C)(x^*) -  \frac{(R_{r+(1-p)\lambda} h_C)'(x^*)}{S'(x^*)}\psi_{r+(1-p)\lambda}(x^*) \right).\nonumber
\end{align}
and, consequently, by using Lemma \ref{Lemma001} that
\begin{align*}
c_{i,1} &= B_{r+(1-p)\lambda}^{-1} \int_{x^*}^\infty \varphi_{r+(1-p)\lambda}(z) h_C(z) m'(z) dz , \\
d_{i,2} &= -B_{r+(1-p)\lambda}^{-1} \int_{0}^{x^*} \psi_{r+(1-p)\lambda}(z) h_C(z) m'(z) dz .
\end{align*}
Now we can solve the constant $c_{a,1}$ by substitution. Indeed, after an application of the resolvent equation, a simplification yields
\begin{align}\label{c2Const}
c_{a,1}=\frac{\lambda p(R_{r+\lambda}R_{r+(1-p)\lambda}h_C)(x^*)-c_{i,1}\psi_{r+(1-p)\lambda}(x^*)+d_{i,2}\varphi_{r+(1-p)\lambda}(x^*)}{\psi_{r+\lambda}(x^*)}.
\end{align}

To characterise the threshold $x^*$, we use the smoothness of $G_a$ over $x^*$. First, an application of the resolvent equation yields
\begin{equation}
\lambda p(R_{r+\lambda}R_{r+(1-p)\lambda}h_C)(x^*) - c_{a,1}\psi_{r+\lambda}'(x^*) = c_{i,1}\psi_{r+(1-p)\lambda}'(x^*) - d_{i,2}\varphi_{r+(1-p)\lambda}'(x^*).
\end{equation}
Using the expression for $c_{a,1}$, we rewrite this condition as
\begin{align}\label{Necessary001}
\lambda p&(R_{r+\lambda}R_{r+(1-p)\lambda} h_C)'(x^*) - \lambda p(R_{r+\lambda}R_{r+(1-p)\lambda} h_C)(x^*)\frac{\psi_{r+\lambda}'(x^*)}{\psi_{r+\lambda}(x^*)} \nonumber = \\
&c_{i,1}\left( \psi_{r+(1-p)\lambda}'(x^*) - \psi_{r+(1-p)\lambda}(x^*) \frac{\psi_{r+\lambda}'(x^*)}{\psi_{r+\lambda}(x^*)} \right) - \\ & d_{i,2}\left(\varphi_{r+(1-p)\lambda}'(x^*) - \varphi_{r+(1-p)\lambda}(x^*) \frac{\psi_{r+\lambda}'(x^*)}{\psi_{r+\lambda}(x^*)} \right) . \nonumber
\end{align} 
We find using \cite[Lemma 2.1]{Lempa2012} and Lemma \ref{Lemma001} that
\begin{align*}
\varphi_{r+(1-p)\lambda}&'(x^*) - \varphi_{r+(1-p)\lambda}(x^*) \frac{\psi_{r+\lambda}'(x^*)}{\psi_{r+\lambda}(x^*)} = \\  \lambda p(R_{r+\lambda}&\varphi_{r+(1-p)\lambda})'(x^*) - 
	\lambda p(R_{r+\lambda}\varphi_{r+(1-p)\lambda})(x^*) \frac{\psi_{r+\lambda}'(x^*)}{\psi_{r+\lambda}(x^*)} - \\
	&\frac{S'(x^*)}{\psi_{r+\lambda}(x^*)}\lambda p \int_0^{x^*}\psi_{r+\lambda}(z)\varphi_{r+(1-p)\lambda}(z)m'(z)dz;
\end{align*}
we can handle the other term on the right hand side of \eqref{Necessary001} similarly. By applying Lemma \ref{Lemma001} also to the left hand side, we can express the condition \eqref{Necessary001} as 
\begin{align}\label{Necessary004}
\nonumber B_{r+(1-p)\lambda} \int_{0}^{x^*} &\psi_{r+\lambda}(z)(R_{r+(1-p)\lambda}h_C)(z)m'(z)dz = \\
& \int_{x^*}^\infty \varphi_{r+(1-p)\lambda}(z)h_C(z)m'(z)dz \int_0^{x^*}\psi_{r+\lambda}(z)\psi_{r+(1-p)\lambda}(z)m'(z)dz \\  \nonumber & +  \int_0^{x^*} \psi_{r+(1-p)\lambda}(z)h_C(z)m'(z)dz \int_0^{x^*}\psi_{r+\lambda}(z)\varphi_{r+(1-p)\lambda}(z)m'(z)dz.
\end{align}
Next, apply the representation \eqref{Resolvent} and change the order of integration; this yields the expression 
\begin{align}\label{Necessary005}
\int_0^{x^*} h_C(y)&\left( \psi_{r+(1-p)\lambda}(y)\int_{0}^{y}\psi_{r+\lambda}(z)\varphi_{r+(1-p)\lambda}(z) m'(z)dz \right. \\ \nonumber &- \left. \varphi_{r+(1-p)\lambda}(y)\int_{0}^{y}\psi_{r+\lambda}(z)\psi_{r+(1-p)\lambda}(z) m'(z)dz \right)m'(y)dy = 0.
\end{align}
Using the properties of functions $\psi_\cdot$ and $\varphi_\cdot$, it is elementary to verify that
\begin{align*}
\lambda p \int_{0}^{y}\psi_{r+\lambda}(z)\varphi_{r+(1-p)\lambda}(z) m'(z)dz &= \frac{\psi_{r+\lambda}'(y)}{S'(y)}\varphi_{r+(1-p)\lambda}(y) - \frac{\varphi_{r+(1-p)\lambda}'(y)}{S'(y)}\psi_{r+\lambda}(y),  \\
\lambda p \int_{0}^{y}\psi_{r+\lambda}(z)\psi_{r+(1-p)\lambda}(z) m'(z)dz &= \frac{\psi_{r+\lambda}'(y)}{S'(y)}\psi_{r+(1-p)\lambda}(y) - \frac{\psi_{r+(1-p)\lambda}'(y)}{S'(y)}\psi_{r+\lambda}(y).
\end{align*}
Finally, using these identities, a round of simplification shows that the condition \eqref{Necessary005} can be rewritten as
\begin{align}\label{Necessary006} 
\int_0^{x^*} \psi_{r+\lambda}(y)&h_C(y)m'(y)dy = 0 \\ &\Leftrightarrow \nonumber \int_0^{x^*} \psi_{r+\lambda}(y)h(y)m'(y)dy = \left(K + \frac{C}{r+\lambda}\right)\frac{\psi_{r+\lambda}'(x^*)}{S'(x^*)}.
\end{align}
By our assumptions, it is obvious that there is a unique $x^*$ satisfying this condition. Summarising, we collect the results derived in this section to the following lemma.

\begin{lemma}\label{CandiLemma}
Let Assumptions \ref{SA} hold. Then the free boundary problem \eqref{FBB} has a unique solution $(G_i,G_a,x^*)$, where the functions $G_i$ and $G_a$ are defined in \eqref{Candidate} such that \eqref{cdConst} and \eqref{c2Const} hold and the threshold $x^*$ is characterised by \eqref{Necessary006}. 
\end{lemma}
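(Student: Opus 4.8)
The derivation preceding the statement already discharges the necessity half of the claim: any triple $(G_i,G_a,x^*)$ that solves \eqref{FBB} while remaining bounded at the origin, respecting the growth bound \eqref{Growth}, and being continuously differentiable is forced into the form \eqref{Candidate}, with $c_{i,1},d_{i,2}$ fixed by value matching and smooth pasting as in \eqref{cdConst}, $c_{a,1}$ fixed by \eqref{c2Const}, and $x^*$ constrained by \eqref{Necessary006}. Hence the plan is to prove the lemma in two remaining steps: (i) show that \eqref{Necessary006} has exactly one root $x^*\in(0,\infty)$, and (ii) confirm conversely that feeding this $x^*$ back into \eqref{Candidate}--\eqref{c2Const} produces a genuine $C^1$ solution of \eqref{FBB} satisfying \eqref{Growth}. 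Only step (i) involves analysis beyond bookkeeping.

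For (i) I would recast \eqref{Necessary006} in ratio form. Using the scale/speed form of the ODE $\mathcal{A}\psi_{r+\lambda}=(r+\lambda)\psi_{r+\lambda}$, namely $\frac{d}{dx}\frac{\psi_{r+\lambda}'(x)}{S'(x)}=(r+\lambda)\psi_{r+\lambda}(x)m'(x)$, together with $\lim_{x\to0+}\psi_{r+\lambda}'(x)/S'(x)=0$ from \eqref{boundary}, the right-hand side of \eqref{Necessary006} equals $(C+K(r+\lambda))\int_0^x\psi_{r+\lambda}(y)m'(y)dy$, so \eqref{Necessary006} is equivalent to $g(x^*)=K+C/(r+\lambda)$, where
\[
g(x)=\frac{S'(x)}{\psi_{r+\lambda}'(x)}\int_0^x\psi_{r+\lambda}(y)h(y)m'(y)dy .
\]
Writing $g=P/Q$ with $P(x)=\int_0^x\psi_{r+\lambda}h\,m'$ and $Q(x)=\psi_{r+\lambda}'/S'$, differentiation gives $g'=\psi_{r+\lambda}m'\,N/Q^2$ with $N=hQ-(r+\lambda)P$; a Stieltjes computation yields $dN=Q\,dh\geq0$ and $N(0)=0$, so $N\geq0$ and $g'\geq0$. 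Thus $g$ is non-decreasing, and strictly increasing wherever $h$ increases, which delivers uniqueness. The endpoints are $g(0+)=h(0)/(r+\lambda)=0$ and, by L'Hôpital (valid since $Q\to\infty$ by \eqref{boundary} and $P\to\infty$ because $h$ is non-negative, non-constant with $h(0)=0$, while $\int^\infty\psi_{r+\lambda}m'=\infty$), $g(\infty)=h(\infty)/(r+\lambda)$. Existence of a root then follows once the payoff eventually dominates total cost, $h(\infty)>C+K(r+\lambda)$, which is the natural non-degeneracy under Assumption \ref{SA} (automatic when $h$ is unbounded), and monotonicity of $g$ makes the root unique.

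For (ii), by construction each branch of \eqref{Candidate} solves the corresponding ODE in \eqref{FBB}; $G_i\in C^1$ across $x^*$ is precisely the value-matching/smooth-pasting pair yielding \eqref{cdConst}, and the growth bound \eqref{Growth} holds because the coefficient of $\psi_{r+(1-p)\lambda}$ on $[x^*,\infty)$ was set to zero. The only point that needs a genuine (though mechanical) verification is that $G_a\in C^1$ across $x^*$: value matching is \eqref{c2Const} by definition of $c_{a,1}$, and derivative matching is exactly the smooth-pasting identity whose simplification produced \eqref{Necessary006}. Here I would reconcile the two resolvent representations of $G_a$ on either side of $x^*$ by invoking the resolvent equation $R_q-R_p+(q-p)R_qR_p=0$ together with \cite[Lemma 2.1]{Lempa2012} and Lemma \ref{Lemma001}, running backwards the manipulations already performed in the derivation.

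I expect the genuinely delicate step to be pinning down $g(\infty)=h(\infty)/(r+\lambda)$ and hence the existence of a finite root: this is the one place where the natural boundary classification at $\infty$ and the integrability $h\in L_1^r$ are actually used, rather than pure algebra. The sufficiency check in (ii) is conceptually routine but tedious, amounting to reversing the necessity computation.
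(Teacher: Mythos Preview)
Your proposal is correct and follows the paper's line: the paper gives no proof beyond the preceding derivation, closing with ``By our assumptions, it is obvious that there is a unique $x^*$ satisfying this condition,'' so your monotonicity analysis of $g$ in step~(i) and your sufficiency verification in step~(ii) simply supply the details the paper leaves implicit. One observation of yours is worth retaining: you correctly note that existence of a finite root of \eqref{Necessary006} requires $h(\infty)>C+K(r+\lambda)$, which is \emph{not} part of Assumption~\ref{SA}; the paper glosses over this here and only makes the dichotomy explicit later in Theorem~\ref{MainThrm}, where Case~(1) disposes of $h(\infty)\le C+K(r+\lambda)$ by showing $V_i\equiv 0$ and Case~(2) imposes the strict inequality before invoking the lemma.
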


\begin{remark}
We notice from the condition \eqref{Necessary006} that the optimal entry threshold does not depend on the parameter $p$. This is a somewhat remarkable result. Indeed, if the decision maker is idle, her decision rule is independent of the success probability of the Bernoulli trial at the next jump time of the Poisson process $N$. Even if this probability is very low, meaning that it is very likely that there will be no further re-entry opportunities after the next jump of $N$, it is still optimal for the decision maker to act as if this probability is equal to one.     
\end{remark}

\subsection{Sufficient Conditions} The purpose of this section is to prove that our candidate solution described in Lemma \ref{CandiLemma} is the solution of the main problem. The next result is our main theorem.

\begin{theorem}\label{MainThrm} Let Assumption \ref{SA} hold.
\begin{itemize}
\item[(1)] If $\lim_{x\rightarrow\infty} h(x) \leq C+(r+\lambda)K$, then $V_i \equiv 0$ and, and it is optimal never to make an entry.
\item[(2)] Let $\lim_{x\rightarrow\infty} h(x) > C+(r+\lambda)K$ and $(G_i,G_a,x^*)$ be the solution given in Lemma \ref{CandiLemma}. Then $V_i(x)=G_i(x)$ for all $x\in(0,\infty)$. Furthermore, the sequence of optimal entry times is recursively defined as
\begin{align}\label{OptStopTime}
\tau_j^* = \inf \left\{ \tau \geq \sigma^*_{j-1} \ : \ X_t \geq x^* \right\}
\end{align}
for every $j\geq1$, where $\sigma^*_0=0$ and $\sigma^*_j = \inf\{ T_k \ : \ \tau^*_j < T_k \}$ for $j\geq1$. 
\end{itemize}
\end{theorem}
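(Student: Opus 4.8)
The plan is to prove Theorem~\ref{MainThrm} by a verification argument based on Dynkin's formula applied to a regime-switching functional, exploiting the reformulation of Lemma~\ref{MainLemma} in which the catastrophe risk is absorbed into the effective discount rate $\rho:=r+(1-p)\lambda$, so that only the forced exits generated by $N^1$ (at rate $\lambda p$) remain. Throughout I write $\sigma_j$ for $\sigma_j^1$. For an admissible sequence $\bar\tau^1=(\tau_j)$, let $Z_t\in\{i,a\}$ denote the idle/active regime it induces, set $G(\cdot,i)=G_i$ and $G(\cdot,a)=G_a$ as in \eqref{Candidate}, and introduce
\[ M_t = e^{-\rho t}G(X_t,Z_t) + \int_0^t e^{-\rho s}(h(X_s)-C)\mathbf{1}_{\{Z_s=a\}}\,ds - \sum_{\tau_j\le t} e^{-\rho\tau_j}K. \]
First I would verify the three structural facts behind \eqref{FBB}: (a) during idle periods the diffusion-plus-killing drift of $e^{-\rho t}G_i(X_t)$ equals $e^{-\rho t}(\mathcal{A}-\rho)G_i(X_t)$ (the rate-$\lambda p$ jumps of $N^1$ do not change the idle regime and contribute $\lambda p(G_i-G_i)=0$), which vanishes on $(0,x^*)$ and equals $-e^{-\rho t}h_C(X_t)\le 0$ on $[x^*,\infty)$ by the second line of \eqref{FBB}, the sign following because $h_C$ is non-decreasing and $\int_0^{x^*}\psi_{r+\lambda}h_C\,m'=0$ forces $h_C(x^*)\ge 0$; (b) during active periods the drift, after compensating the rate-$\lambda p$ switch $a\to i$, equals $e^{-\rho s}[(\mathcal{A}-(r+\lambda))G_a+\lambda pG_i+h-C]=0$ by the third line of \eqref{FBB}; and (c) at each entry time the jump of $M$ is $e^{-\rho\tau_j}(G_a(X_{\tau_j})-G_i(X_{\tau_j})-K)$, which is $\le 0$ provided the obstacle inequality $G_i\ge G_a-K$ holds on all of $(0,\infty)$.

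Granting these, $M$ is a supermartingale for every admissible strategy. By localisation together with the growth bounds \eqref{Growth} and the a priori estimate $\sum_j e^{-\rho\tau_j}K\le K(r+\lambda)/(r+(1-p)\lambda)$, the terminal contribution $\mathbf{E}_x[e^{-\rho T}G(X_T,Z_T)]$ vanishes as $T\to\infty$; passing to the limit in $G_i(x)=M_0\ge\mathbf{E}_x[M_T]$ then yields $V_i(x)\le G_i(x)$. For the reverse inequality and optimality I would run the identical computation along the candidate strategy $(\tau_j^*)$ of \eqref{OptStopTime}: under it idle periods are spent in $(0,x^*)$, where the drift in (a) is exactly zero, and every entry occurs at $x^*$, where $G_i=G_a-K$ so the jump in (c) vanishes; hence $M$ is a true martingale, the limit argument gives equality, and $V_i(x)=G_i(x)$ with $(\tau_j^*)$ optimal, its admissibility $\tau_j^*\le\sigma_j^*\le\tau_{j+1}^*$ being immediate from the recursion. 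Part~(1) is handled by the degenerate candidate $G_i\equiv 0$, $G_a=(R_{r+\lambda}h)-C/(r+\lambda)$: the never-enter strategy gives $V_i\ge 0$, while monotonicity of the resolvent of the non-decreasing $h$ gives $(R_{r+\lambda}h)(x)<\lim_{y\to\infty}h(y)/(r+\lambda)\le C/(r+\lambda)+K$, so $G_a-K<0=G_i$ everywhere; the obstacle inequality then holds globally and the supermartingale bound forces $V_i\le 0$, whence $V_i\equiv 0$.

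The main obstacle is the global obstacle inequality $G_i\ge G_a-K$ on $(0,x^*)$ in case~(2), since on this interval $h_C$ is not single-signed and the naive pointwise bound fails. Writing $F:=G_a-G_i-K$, the value-matching and smooth-pasting of Lemma~\ref{CandiLemma} give $F(x^*)=F'(x^*)=0$, so the task reduces to showing $F\le 0$ to the left of $x^*$; I expect to establish this either by analysing the sign of $F'$ through the constant $c_{a,1}$ in \eqref{c2Const} together with Lemma~\ref{Lemma001}, or, more robustly, by passing to the scale $\psi_\rho/\varphi_\rho$ in which $G_i$ becomes the least concave majorant of the transformed reward, so that $G_i\ge G_a-K$ becomes automatic. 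The remaining technical points---the $C^1$-regularity of $G_i$ and $G_a$ across $x^*$ so that Itô's formula applies despite the jump of the second derivative there, and the uniform integrability needed to interchange limit and expectation---are routine given Assumption~\ref{SA} and \eqref{Growth}.
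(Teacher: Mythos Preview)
Your argument is correct and takes a genuinely different route from the paper. The paper works only with the idle value: it shows $t\mapsto e^{-\rho t}G_i(X_t)$ is a non-negative supermartingale, then uses a dedicated Bellman identity (Lemma~\ref{BellmanLemma}) proved by resolvent manipulations---crucially invoking the optimality condition \eqref{Necessary004}---to pass from $G_i$ to $G_a$ and back at each round, iterating the chain \eqref{MainThrmIneq}. You instead bundle both regimes into a single Dynkin functional $M_t$ and read off the supermartingale property directly from the generator identities in \eqref{FBB}. Your approach is more transparent about \emph{which} differential inequalities are actually needed (your items (a)--(c)), and your treatment of the vanishing terminal term via the growth bounds \eqref{Growth} and $h\in L^1_r$ is arguably cleaner than the paper's $\psi_\rho$-transform argument (Lemma~\ref{LimitLemma}). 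The price you pay is that you must apply It\^o to $G_a$, whose second derivative jumps at $x^*$; this is routine (It\^o--Tanaka, or a local-time argument at a single point), but the paper sidesteps it entirely by never differentiating $G_a$---indeed, the paper remarks that the $C^2$-hypothesis imposed in \cite{Wang2005} is unnecessary precisely because Lemma~\ref{BellmanLemma} replaces the generator computation. On the key obstacle inequality $G_i\ge G_a-K$ on $(0,x^*)$: you correctly isolate this as the remaining analytic gap and sketch two plausible attacks; the paper simply asserts it (``the fact that $G_i(x)\ge G_a(x)-K$''), so you are not missing anything the paper supplies. Your Case~(1) argument via the degenerate pair $G_i\equiv 0$, $G_a=(R_{r+\lambda}h)-C/(r+\lambda)$ is equivalent to the paper's direct estimate $(R_{r+\lambda}\bar h)(x)\le K$.
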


Before proving the main theorem, we establish some auxiliary results needed in the proof. The first one shows essentially that the candidates $G_i$ and $G_a$ satisfy Bellman's principle.

\begin{lemma}\label{BellmanLemma}
Let $\tau$ be a stopping time and $\sigma := \inf \{ T_j \ : \ \tau < T_j \}$ where the inter-arrival times $T_{j+1}-T_j\sim \Exp(\lambda p)$. Then
\[ e^{-(r+(1-p)\lambda)\tau}G_a(X_\tau) = \mathbf{E}_{X_\tau}\left[ \int_\tau^\sigma e^{-(r+(1-p)\lambda)t}(h(X_t)-C)dt + e^{-(r+(1-p)\lambda)\sigma}  G_i(X_\sigma) \right]. \]
\end{lemma}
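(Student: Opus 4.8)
The plan is to pass to the case $\tau=0$ by the Markov property, integrate out the independent exponential exit time, and then recognise the resulting resolvent expression as $G_a$ itself. First I would invoke the strong Markov property of $X$, the time-homogeneity of the dynamics, and the memorylessness of the jumps of $N^1$. Conditioning on $\mathcal{F}_\tau$ and factoring $e^{-(r+(1-p)\lambda)t}=e^{-(r+(1-p)\lambda)\tau}e^{-(r+(1-p)\lambda)(t-\tau)}$ across $[\tau,\sigma]$, the claim reduces, after cancelling the common factor $e^{-(r+(1-p)\lambda)\tau}$, to the identity
\begin{align*}
G_a(x) = \mathbf{E}_x\left[ \int_0^{\sigma} e^{-(r+(1-p)\lambda)t}(h(X_t)-C)\,dt + e^{-(r+(1-p)\lambda)\sigma} G_i(X_\sigma) \right]
\end{align*}
for every $x\in(0,\infty)$, where $\sigma\sim\Exp(\lambda p)$ is the first jump of $N^1$ and is independent of $X$.

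Second, I would evaluate this right-hand side by integrating out $\sigma$. Using $\mathbf{P}(\sigma>t)=e^{-\lambda p t}$, the density $\lambda p\,e^{-\lambda p s}$ of $\sigma$, Fubini's theorem, the independence of $\sigma$ from $X$, and the arithmetic identity $(r+(1-p)\lambda)+\lambda p=r+\lambda$, the running term collapses to $(R_{r+\lambda}h)(x)-\tfrac{C}{r+\lambda}$ and the terminal term to $\lambda p\,(R_{r+\lambda}G_i)(x)$. Legitimacy requires $G_i\in L_1^{r+\lambda}$; this I would obtain from the growth bound $0\le G_i\le R_{r+(1-p)\lambda}h$ of \eqref{Growth} together with the resolvent equation, which renders $R_{r+\lambda}R_{r+(1-p)\lambda}h$ finite. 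Hence the right-hand side equals $W(x):=(R_{r+\lambda}h)(x)-\tfrac{C}{r+\lambda}+\lambda p\,(R_{r+\lambda}G_i)(x)$.

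It then remains to show $W\equiv G_a$, and here I would argue by ODE-uniqueness rather than by unwinding the piecewise formulas. The resolvent identity gives $(\mathcal{A}-(r+\lambda))W=-(h-C+\lambda p G_i)$ on all of $(0,\infty)$, while the third line of \eqref{FBB}, satisfied by $G_a$ for every $x>0$ by construction, reads $(\mathcal{A}-(r+\lambda))G_a=-(h-C+\lambda p G_i)$. Since $G_i$ is $C^1$ and $h$ is continuous, the common forcing term is continuous, so $W\in C^2$ and $G_a\in C^1$ solve the same second-order linear ODE. On $(0,x^*)$ and on $(x^*,\infty)$ separately, their difference equals $A_\pm\psi_{r+\lambda}+B_\pm\varphi_{r+\lambda}$; the $C^1$-continuity of $G_a-W$ at $x^*$ and the linear independence of $\psi_{r+\lambda},\varphi_{r+\lambda}$ force the two representations to coincide, so $G_a-W=A\psi_{r+\lambda}+B\varphi_{r+\lambda}$ globally. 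Boundedness of $W$ and $G_a$ at the origin eliminates the exploding solution, giving $B=0$, and the growth condition \eqref{Growth} at the natural boundary $\infty$, weighed against the maximal growth of $\psi_{r+\lambda}$, forces $A=0$.

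I expect the elimination of the $\psi_{r+\lambda}$-component at infinity to be the main obstacle: unlike the soft boundedness argument at $0$, it needs a genuine growth comparison of the resolvents $R_{r+\lambda}h$ and $R_{r+\lambda}G_i$ against $\psi_{r+\lambda}$ at a natural boundary, where $\int_0^\infty\psi_{r+\lambda}h\,m'$ may diverge and the ratio $R_{r+\lambda}h/\psi_{r+\lambda}$ is a priori of indeterminate form. The reduction and the exponential integration are routine, and—exactly as in the derivation of the candidates—the resolvent equation is the algebraic workhorse throughout; indeed, if one prefers, the same constants that define $c_{a,1}$ in \eqref{c2Const} allow one to verify $W\equiv G_a$ by direct substitution, sidestepping the asymptotic comparison altogether.
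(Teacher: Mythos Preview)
Your reduction to $\tau=0$ via strong Markov and memorylessness, and the subsequent rewriting of the right-hand side as $W(x)=(R_{r+\lambda}\bar h)(x)+\lambda p\,(R_{r+\lambda}G_i)(x)$, match the paper exactly. The divergence comes in the last step, where you identify $W$ with $G_a$ by ODE-uniqueness, while the paper computes $\lambda p\,(R_{r+\lambda}G_i)$ directly: it splits the integral at $x^*$ via Remark~\ref{Resolvent Remark}, inserts the piecewise expression for $G_i$, and then uses \cite[Lemma 2.1]{Lempa2012} together with the necessary condition \eqref{Necessary004} (equivalently \eqref{Necessary006}) to collapse the result to $G_a-(R_{r+\lambda}\bar h)$, separately for $x<x^*$ and $x\ge x^*$. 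Your route is more conceptual and, interestingly, closer in spirit to the approach the paper explicitly contrasts with in the remark following the proof (attributed to \cite{Wang2005}, where the free boundary problem is used as a variational inequality and the explicit form of the solution is secondary). The paper's choice buys a self-contained verification that uses only the derived constants and avoids any asymptotic comparison at the upper boundary; your choice buys transparency and would generalise more easily, at the cost of the very obstacle you flag, namely killing the $\psi_{r+\lambda}$-component at the natural boundary $\infty$. That step can indeed be closed---for instance by showing $G_a,W\in L_1^{r+\lambda}$ via the resolvent equation and then observing that $\psi_{r+\lambda}\notin L_1^{r+\lambda}$---but it does require an additional argument beyond \eqref{Growth} alone, so your caveat is well placed; your suggested fallback of direct substitution is precisely what the paper does.
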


\begin{proof}
Strong Markov property of $X$ coupled with the memoryless property of the exponential distribution guarantees that it is sufficient to show that
\begin{align*}
G_a(x) = \mathbf{E}_x\left[ \int_0^U e^{-(r+(1-p)\lambda)t}(h(X_t)-C)dt + e^{-(r+(1-p)\lambda)U} G_i(X_U) \right],
\end{align*}
where $U\sim \Exp(\lambda p)$, for all $x\in(0,\infty)$. Using the resolvent equation, this can be rewritten as
\begin{align*}
G_a(x) = (R_{r+\lambda}\bar{h})(x) + \lambda p (R_{r+\lambda}G_i)(x),
\end{align*}
where $\bar{h}(x)=h(x)-C$. First, let $x < x^*$. Using Remark \ref{Resolvent Remark}, we find that
\begin{align*}
\lambda p (R_{r+\lambda}G_i)(x) &= \lambda p ((R_{r+\lambda}\hat{G_i}_{x^*})(x) + (R_{r+\lambda}\check{G_i}_{x^*})(x)) = \\
																&= \lambda p B_{r+\lambda}^{-1} \psi_{r+\lambda}(x)\int_{x^*}^\infty \varphi_{r+\lambda}(z)G_i(z)m'(z)dz  \\
																+ \lambda p B_{r+\lambda}^{-1}&\left( \varphi_{r+\lambda}(x)\int_0^x \psi_{r+\lambda}(z)G_i(z)m'(z)dz + \psi_{r+\lambda}(x)\int_x^{x^*} \varphi_{r+\lambda}(z)G_i(z)m'(z)dz \right).
\end{align*}
By substituting the expression for $G_i$ from \eqref{Candidate}, we obtain by using first \cite[Lemma 2.1]{Lempa2012}, then the optimality condition \eqref{Necessary004} and finally \cite[Lemma 2.1]{Lempa2012} again
\begin{align*}
\lambda p (R_{r+\lambda}G_i)&(x) = c_{i,1} \psi_{r+(1-p)\lambda}(x) - \lambda p B_{r+\lambda}^{-1} \psi_{r+\lambda}(x) \\ 
					&\times \int_{x^*}^\infty \varphi_{r+\lambda}(z) \left( (R_{r+(1-p)\lambda}h_C)(z) + d_{i,2}\varphi_{r+(1-p)\lambda}(z) - c_{i,1}\psi_{r+(1-p)\lambda}(z)   \right)m'(z)dz \\
					&= c_{i,1} \psi_{r+(1-p)\lambda}(x) + \psi_{r+\lambda}(x) \\ 
					&\times \frac{\lambda p (R_{r+\lambda}R_{r+(1-p)\lambda}h_C)(x^*) + d_{i,2} \varphi_{r+(1-p)\lambda}(x^*) - c_{i,1} \psi_{r+(1-p)\lambda}(x^*)}{\psi_{r+\lambda}(x^*)} \\
					&= G_a(x) - (R_{r+\lambda}\bar{h})(x).	
\end{align*}
Now, let $x \geq x^*$. Again, by using Remark \ref{Resolvent Remark}, we find that
\begin{align*}
\lambda p (R_{r+\lambda}G_i)(x) &= \lambda p ((R_{r+\lambda}\hat{G_i}_{x^*})(x) + (R_{r+\lambda}\check{G_i}_{x^*})(x))  \\
																= \lambda p B_{r+\lambda}^{-1} &\left( \varphi_{r+\lambda}(x)\int_{x^*}^x \psi_{r+\lambda}(z)G_i(z)m'(z)dz + \psi_{r+\lambda}(x)\int_x^\infty \varphi_{r+\lambda}(z)G_i(z)m'(z)dz \right) \\
																&+ \lambda p B_{r+\lambda}^{-1}	\varphi_{r+\lambda}(x)\int_0^{x^*} \psi_{r+\lambda}(z)G_i(z)m'(z)dz.
\end{align*}
By substituting the expression for $G_i$ from \eqref{Candidate}, we find by using first the optimality condition \eqref{Necessary004}, then \cite[Lemma 2.1]{Lempa2012}, and finally the resolvent equation, that
\begin{align*}
\lambda &p (R_{r+\lambda}G_i)(x) = \\
&\lambda p B_{r+\lambda}^{-1}\left( \varphi_{r+\lambda}(x)\int_{x^*}^x \psi_{r+\lambda}(z)\left( (R_{r+(1-p)\lambda}h_C)(z)+d_{i,2}\varphi_{r+(1-p)\lambda}(z) \right)m'(z)dz\right) + \\
&\lambda p B_{r+\lambda}^{-1}\left(  \psi_{r+\lambda}(x)\int_x^\infty \varphi_{r+\lambda}(z)\left( (R_{r+(1-p)\lambda}h_C)(z)+d_{i,2}\varphi_{r+(1-p)\lambda}(z) \right)m'(z)dz \right) + \\
& \lambda p B_{r+\lambda}^{-1}	\varphi_{r+\lambda}(x)\int_0^{x^*} \psi_{r+\lambda}(z)c_{i,1}\psi_{r+(1-p)\lambda}(z)m'(z)dz = \\
&\lambda p (R_{r+\lambda} R_{r+(1-p)\lambda}h_C)(x) + d_{i,2}\varphi_{r+(1-p)\lambda}(x) = G_a(x) - (R_{r+\lambda}\bar{h})(x).
\end{align*}
This completes the proof.
\end{proof}

\begin{remark}
In \cite{Wang2005}, this results was proved using the free boundary problem \eqref{FBB} as a variational inequality. There, the fact that the candidates solve the variational inequality was primary and the form of the actual solution was secondary. In \eqref{BellmanLemma}, we did the opposite and proved the result by the properties of the solution itself. 
\end{remark}

\begin{remark}\label{SuffiRemark}
Since the function $G_i$ is finite and $r+(1-p)\lambda$-excessive, the process $t\mapsto e^{-(r+(1-p)\lambda)t}G_i(X_t)$ is a non-negative supermartingale, see, e.g., \cite{CW}. Furthermore, let $\tau$ be a stopping time and 
\[ \tau_{x^*} = \inf \{ t\geq \tau \ : \ X_t \geq x^* \}. \]
We claim that 
\[ e^{-(r+(1-p)\lambda)\tau}G_i(X_\tau) = \mathbf{E}_{X_\tau}\left[ e^{-(r+(1-p)\lambda)\tau_{x^*}}G_i(X_{\tau_{x^*}}) \right]. \]
By strong Markov property, we can take $\tau=0$. If $x:=X_\tau \geq x^*$, the claim holds trivially. Let $x<x^*$. Since $\psi_{r+(1-p)\lambda}$ is $r+(1-p)$-harmonic and  $X$ has continuous paths, we have that
\[ \mathbf{E}_{x}\left[ e^{-(r+(1-p)\lambda){\tau_{x^*}}}G_i(X_{\tau_{x^*}}) \right] = \mathbf{E}_{x}\left[ e^{-(r+(1-p)\lambda){\tau_{x^*}}}\right] c_{i,1}\psi_{r+(1-p)\lambda}(x^*) = G_i(x);   \]
for the last equality, see, e.g., \cite{BoSa}.
\end{remark}

\begin{lemma}\label{LimitLemma}
Let $(\tau_j)$ be a sequence of stopping times such that $\tau_j\rightarrow\infty$ as $j \rightarrow \infty$. Then $\mathbf{E}_x \left[e^{-(r+(1-p)\lambda)\tau_{j}} G_i(X_{\tau_{j}})\right] \rightarrow 0$ as $j \rightarrow \infty$.
\end{lemma}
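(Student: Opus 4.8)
The plan is to dominate $G_i$ by its growth bound and then read the resulting quantity probabilistically through the resolvent. Abbreviate $\rho := r+(1-p)\lambda$. By Remark \ref{SuffiRemark} the candidate $G_i$ is $\rho$-excessive, hence non-negative, and the growth condition \eqref{Growth} gives $0 \le G_i(x) \le (R_{\rho} h)(x)$ for all $x$. Consequently
\[ 0 \le \mathbf{E}_x\left[e^{-\rho\tau_j}G_i(X_{\tau_j})\right] \le \mathbf{E}_x\left[e^{-\rho\tau_j}(R_{\rho} h)(X_{\tau_j})\right], \]
so it suffices to show that the right-hand side tends to zero.

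The next step is to rewrite $e^{-\rho\tau_j}(R_\rho h)(X_{\tau_j})$ using the strong Markov property. Expanding the resolvent as $(R_\rho h)(y) = \mathbf{E}_y[\int_0^\infty e^{-\rho t}h(X_t)dt]$ and applying the strong Markov property at $\tau_j$, after the time shift $s=t-\tau_j$, yields the identity
\[ e^{-\rho\tau_j}(R_\rho h)(X_{\tau_j}) = \mathbf{E}_x\left[\int_{\tau_j}^\infty e^{-\rho t}h(X_t)\,dt \;\middle|\; \mathcal{F}_{\tau_j}\right], \]
where on $\{\tau_j=\infty\}$ both sides are read as zero (there $e^{-\rho\tau_j}=0$ and the tail integral is empty). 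Taking expectations and invoking the tower property then gives
\[ \mathbf{E}_x\left[e^{-\rho\tau_j}(R_\rho h)(X_{\tau_j})\right] = \mathbf{E}_x\left[\int_{\tau_j}^\infty e^{-\rho t}h(X_t)\,dt\right]. \]

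It remains to pass to the limit in this tail integral. Under Assumption \ref{SA} we have $h\ge 0$, since $h$ is non-decreasing with $h(0)=0$; and because $\rho>r$ we have the inclusion $L^r_1 \subseteq L^\rho_1$, so $h\in L^\rho_1$ and the random variable $\int_0^\infty e^{-\rho t}h(X_t)\,dt$ is $\mathbf{P}_x$-integrable. As $\tau_j\to\infty$ $\mathbf{P}_x$-a.s., the tail integrals $\int_{\tau_j}^\infty e^{-\rho t}h(X_t)\,dt$ converge to zero $\mathbf{P}_x$-a.s. (the map $t\mapsto\int_t^\infty e^{-\rho s}h(X_s)\,ds$ is non-increasing and vanishes at infinity), while being dominated by the integrable majorant $\int_0^\infty e^{-\rho t}h(X_t)\,dt$. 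Dominated convergence then forces $\mathbf{E}_x[\int_{\tau_j}^\infty e^{-\rho t}h(X_t)\,dt]\to 0$, which, combined with the sandwich bound above, proves the claim.

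The only genuinely delicate point is the strong Markov manipulation in the second step: one must verify that the time-shifted integral behaves correctly under the conditioning on $\mathcal{F}_{\tau_j}$ and that the convention on $\{\tau_j=\infty\}$ is consistent throughout. Everything else is immediate from the standing assumptions: the non-negativity of $h$, the integrability coming from $\rho>r$, and the concluding dominated convergence argument are all routine.
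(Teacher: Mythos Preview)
Your proof is correct and takes a genuinely different route from the paper's. The paper rewrites $\mathbf{E}_x[e^{-\rho\tau_j}G_i(X_{\tau_j})] = \psi_\rho(x)\,\hat{\mathbf{E}}_x[G_i(X_{\tau_j})/\psi_\rho(X_{\tau_j})]$ via Doob's $\psi_\rho$-transform, checks from the explicit representation of the candidate that $G_i/\psi_\rho \to 0$ at the upper boundary, and concludes because the transformed process is $X$ conditioned to exit through $\infty$. You instead dominate $G_i$ by $R_\rho h$ using the growth condition, interpret $e^{-\rho\tau_j}(R_\rho h)(X_{\tau_j})$ as a conditional tail integral through the strong Markov property, and finish by dominated convergence against the $\mathbf{P}_x$-integrable majorant $\int_0^\infty e^{-\rho t}h(X_t)\,dt$. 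Your argument is more elementary---it avoids the $h$-transform machinery entirely---and uses only the integrability of $h$ and the growth bound, not the explicit piecewise form of $G_i$; the paper's argument, on the other hand, leans directly on one-dimensional diffusion theory and the structure of the candidate. One small point worth flagging: you cite the bound \eqref{Growth} as given, whereas in the paper it is introduced as an ansatz steering the derivation rather than as a verified property of the candidate \eqref{Candidate}; it does hold (a short check from the explicit constants suffices), but strictly speaking that verification is part of your argument.
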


\begin{proof}
For each $j\geq 1$, we write
\begin{align}
\mathbf{E}_x \left[e^{-(r+(1-p)\lambda)\tau_{j}} G_i(X_{\tau_{j}})\right] = \psi_{r+(1-p)\lambda}(x)\hat{\mathbf{E}}_x \left[ \frac{G_i(X_{\tau_{j}})}{\psi_{r+(1-p)\lambda}(X_{\tau_{j}})}\right],
\end{align}
where $\hat{\mathbf{E}}_x$ is the expectation with respect to the probability associated with Doob's $\psi_{r+(1-p)\lambda}$-transform of $X$, see, e.g., \cite{BoSa}. We find using the representation \eqref{Resolvent} that $G_i(x)/\psi_{r+(1-p)\lambda}(x)\rightarrow 0$ as $x\rightarrow \infty$. Since the $\psi_{r+(1-p)\lambda}$-transform of $X$ is the process $X$ conditioned to exit the state space via the upper boundary $\infty$, the claim follows. 
\end{proof}

\begin{proof}[Proof of Theorem \ref{MainThrm}]
We prove first Case (2). Let $\bar{\tau}=(\tau_n)$ be an arbitrary sequence of stopping times such that $\tau_n \leq \sigma_n \leq \tau_{n+1}$ where $\sigma_n = \inf \{ T_j \ : \ \tau_n < T_j \}$ and the inter-arrival times $T_{j+1}-T_j\sim \Exp(\lambda p)$. By using the supermartingale property of $t\mapsto e^{-(r+(1-p)\lambda)t}G_i(X_t)$, the fact that $G_i(x)\geq G_a(x) - K$ for all $x \in (0,\infty)$, then Lemma \ref{BellmanLemma}, and the supermartingale property again, we obtain
\begin{align}\label{MainThrmIneq}
G_i(x) 	&\geq \mathbf{E}_x\left[ e^{-(r+(1-p)\lambda)\tau_1}G_i(X_{\tau_1}) \right] \nonumber \\
				&\geq \mathbf{E}_x\left[ e^{-(r+(1-p)\lambda)\tau_1}G_a(X_{\tau_1}) - e^{-(r+(1-p)\lambda)\tau_1}K \right]  \\
				&= \mathbf{E}_x\left[ \int_{\tau_1}^{\sigma_1} e^{-(r+(1-p)\lambda)t}(h(X_t)-C)dt + e^{-(r+(1-p)\lambda)\sigma_1} G_i(X_{\sigma_1}) - e^{-(r+(1-p)\lambda)\tau_1}K \right] \nonumber \\
				&\geq \mathbf{E}_x\left[ \int_{\tau_1}^{\sigma_1} e^{-(r+(1-p)\lambda)t}(h(X_t)-C)dt - e^{-(r+(1-p)\lambda)\tau_1}K \right]  + \mathbf{E}_x \left[e^{-(r+(1-p)\lambda)\tau_2} G_i(X_{\tau_2})\right]. \nonumber
\end{align}
By repeating this argument, we find that
\begin{align*}
G_i(x) &= \mathbf{E}_x\left[ \sum_{j=1}^n \int_{\tau_j}^{\sigma_j} e^{-(r+(1-p)\lambda)t}(h(X_t)-C)dt - e^{-(r+(1-p)\lambda)\tau_j}K  \right]  \\ &+ \mathbf{E}_x \left[e^{-(r+(1-p)\lambda)\tau_{j+1}} G_i(X_{\tau_{j+1}})\right] \\
& \geq \mathbf{E}_x\left[ \sum_{j=1}^n \int_{\tau_j}^{\sigma_j} e^{-(r+(1-p)\lambda)t}(h(X_t)-C)dt - e^{-(r+(1-p)\lambda)\tau_j}K  \right], 
\end{align*}
for all $n \geq 1$. Let $n \rightarrow \infty$ and apply Dominated Convergence Theorem. Then, by taking supremum over all $\bar{\tau}$, we obtain $G_i(x)\geq V_i(x)$ for all $x\in(0,\infty)$.

To establish the opposite inequality, let $\tau_i = \tau^*_i$, where $\tau^*_i$ is given by \eqref{OptStopTime}. We find using Remark \ref{SuffiRemark} that in this case, all inequalities in \eqref{MainThrmIneq} become equalities. Therefore
\begin{align*} 
G_i(x) = \mathbf{E}_x&\left[ \sum_{j=1}^n \int_{\tau^*_j}^{\sigma^*_j} e^{-(r+(1-p)\lambda)t}(h(X_t)-C)dt - e^{-(r+(1-p)\lambda)\tau^*_j}K  \right]  \\& + \mathbf{E}_x \left[e^{-(r+(1-p)\lambda)\tau^*_{j+1}} G_i(X_{\tau^*_{j+1}})\right],
\end{align*}
for all $j\geq1$. Clearly $\tau^*_j \geq T_{j-1}$. This implies that $\tau^*_j \rightarrow \infty$ almost surely. Using Lemma \ref{LimitLemma}, we find that
\[ \mathbf{E}_x \left[e^{-(r+(1-p)\lambda)\tau^*_{j+1}} G_i(X_{\tau^*_{j+1}})\right] \rightarrow 0, \]
as $j \rightarrow \infty$. Letting $n \rightarrow \infty$, we obtain by Dominated Convergence
\begin{align*}
G_i(x) = \mathbf{E}_x\left[ \sum_{j=1}^\infty \int_{\tau^*_j}^{\sigma^*_j} e^{-(r+(1-p)\lambda)t}(h(X_t)-C)dt - e^{-(r+(1-p)\lambda)\tau^*_j}K  \right], 
\end{align*}
which implies that $G_i(x)\leq V_i(x)$ for all $x\in(0,\infty)$. Thus $G_i = V_i$ and the sequence $\tau^*_j$ yields the optimal value.

Next, we consider Case (1). It is sufficient to show that for every $\bar{\tau}$, we have 
\[ \mathbf{E}_x\left[ \int_{\tau_j}^{\sigma_j} e^{r+(1-p)\lambda}(h(X_t)-C)dt + e^{-(r+(1-p)\lambda)\tau_j}K \right] \leq 0, \] 
for all $j$. By the strong Markov property of $X$ and the memoryless property of exponential distribution, we only need to establish that
\[ \mathbf{E}_x\left[ \int_0^U e^{-(r+(1-p)\lambda)t}(h(X_t)-C)dt \right] = (R_{r+\lambda}\bar{h})(x)\leq K.  \]
By the monotonicity of $h$, we find that
\[ (R_{r+\lambda}\bar{h})(x) \leq \frac{\lim_{x \rightarrow \infty}h(x)-C}{r+\lambda} \leq K.  \]
The proof is now complete.
\end{proof}

\begin{remark}
Under Assumptions \ref{SA}, it is possible to prove, similarly to Theorem \ref{MainThrm}, that the candidate $G_a=V_a$ and that the optimal entry threshold is $x^*$.
\end{remark}

\begin{remark}
The properties of the value functions and the optimal entry threshold with respect to the parameter $\lambda$ were studied in detail in \cite{Wang2005} in the case of GBM dynamics. For a general diffusion process, a similar analysis is a formidable task and is left for future research. 
\end{remark}

\section{Some Illustrations}

We illustrate in this section some of our results using explicit  examples. 

\subsection{Geometric Brownian Motion} Assume that the process $X$ follows a geometric Brownian motion, that is, a diffusion process with the infinitesimal generator
\[ \mathcal{A} = \frac{1}{2}\beta^2 x^2 \frac{d^2}{dx^2} + \alpha x \frac{d}{dx}. \] 
We assume that $\alpha-\frac{1}{2}\beta^2>0$. In this case, the process $X_t\rightarrow \infty$ almost surely as $t\rightarrow \infty$. It is easy to check that for an arbitrary $\rho>0$, the functions $\psi_\rho$ and $\varphi_\rho$ read as
\begin{align*} 
\psi_\rho(x) &= x^{b(\rho)}, \ b(\rho)=\left(\frac{1}{2}-\frac{\alpha}{\beta^2}\right) + \sqrt{\left(\frac{1}{2}-\frac{\alpha}{\beta^2}\right)^2 + \frac{2\rho}{\beta^2}}, \\
\varphi_\rho(x) &= x^{a(\rho)}, \ a(\rho)=\left(\frac{1}{2}-\frac{\alpha}{\beta^2}\right) - \sqrt{\left(\frac{1}{2}-\frac{\alpha}{\beta^2}\right)^2 + \frac{2\rho}{\beta^2}}. \\
\end{align*}
We verify readily that the densities $S'$ and $m'$ read as $S'(x) = x^{-\frac{2\alpha}{\beta^2}}$ and $m'(x)=\frac{2}{\beta^2 x^2}x^{\frac{2\alpha}{\beta^2}}$. Moreover, the Wronskian determinant $B_\rho = 2\sqrt{\left(\frac{1}{2}-\frac{\alpha}{\beta^2}\right)^2 + \frac{2\rho}{\beta^2}}$. 

Using this information and the formula \eqref{Necessary006}, we find that the optimal entry threshold $x^*$ is characterized by
\[ \frac{2}{\beta^2}\int_0^{x^*} z^{-(a(r+\lambda)+1)} h(z)dz = \left(K + \frac{C}{r+\lambda}\right)b(r+\lambda){x^*}^{b(r+\lambda)+\frac{2\alpha}{\beta^2}-1}, \]
which can be further simplified to
\begin{align*}
\int_0^{1} y^{-(a(r+\lambda)+1)} h(yx^*)dz = \frac{1}{2}\beta^2b(r+\lambda)\left(K + \frac{C}{r+\lambda}\right).
\end{align*}
This is the expression (4.19) in \cite{Wang2005}.

As we have observed already in the general case, the optimal entry threshold is independent of the parameter $p$. We illustrate the effect of the parameter $p$ on the value $G_i$. Let $x \leq x^*$. Straightforward integration yields
\begin{align*}
G_i(x)= \frac{2}{\beta^2 B_{r+(1-p)\lambda}}\left(\frac{x}{x^*}\right)^{b(r+(1-p)\lambda)} \int_1^\infty y^{-(b(r+(1-p)\lambda)+1)} h_C(y x^*)dy. 
\end{align*}
Since $b(r+(1-p)\lambda)>1$ for all $p\in[0,1]$, the term $\left(\frac{x}{x^*}\right)^{b(r+(1-p)\lambda)}$ becomes smaller as $p$ approaches zero. Similarly, we observe that the integral term becomes smaller as $p$ approaches zero. Finally, since the Wronskian $B_{r+(1-p)\lambda}$ increases as $p$ approaches zero, we conclude that as the probability of success $p$ becomes smaller, the value of the idle problem decreases on $(0,x^*)$. 

Let $x \geq x^*$. Straightforward integration and an application of \eqref{Resolvent} yields
\begin{align*}
G_i(x) &= (R_{r+(1-p)\lambda}h_C)(x) -\frac{2}{\beta^2 B_{r+(1-p)\lambda}}\left(\frac{x}{x^*}\right)^{a(r+(1-p)\lambda)} \int_0^1 y^{-(a(r+(1-p)\lambda)+1)}h_C(y x^*)dy \\
				&= \frac{2}{\beta^2 B_{r+(1-p)\lambda}}\left[ \int_0^1 y^{-(a(r+(1-p)\lambda)+1)}\left( h_C(yx) - \left(\frac{x}{x^*}\right)^{a(r+(1-p)\lambda)} h_C(yx^*) \right)dy \right. \\
				&\phantom{abcdefghijklmnop}+ \left. \int_1^\infty y^{-(b(r+(1-p)\lambda)+1)}h_C(yx)dy \vphantom{\left(\frac{x}{x^*}\right)^{a(r+(1-p)\lambda)}} \right].
\end{align*}
We observe by standard differentiation that in the expression above, both integrands decrease on their respective intervals as $p$ approaches zero. Summarizing, we conclude that the value $G_i$ decreases on $(0,\infty)$ as $p$ decreases. This observation is in line with the general result and illustrates that increased catastrophe risk decreases value.

To conclude, we graphically illustrate the value function $G_i$ for various values of $p$. Let $h(x)=\sqrt{x}$. It is easy to verify that the optimal entry threshold
 \[ x^* = \left[\left(\frac{1}{2}-a(r+\lambda) \right)\left(\frac{1}{2}\beta^2b(r+\lambda)\left(K + \frac{C}{r+\lambda}\right) \right) \right]^2.  \]
In Figure \ref{fig1} we illustrate the curves $x \mapsto c_{i,1}\psi_{r+(1-p)\lambda}(x)$ and and $x\mapsto (R_{r+(1-p)\lambda}h_C)(x)+d_{i,2}\varphi_{r+(1-p)\lambda}(x)$ around the optimal entry threshold $x^*$ under the parameter configuration $\alpha = 0.05$, $\beta=0.25$, $r=0.1$, $K=C=\lambda=1$, and $p=0.5$. The figure suggests that the curves tangent at $x^*$. This is in line with the smoothness requirement of $G_i$.

\begin{figure}[ht]
\caption{The curves $x \mapsto c_{i,1}\psi_{r+(1-p)\lambda}(x)$ (solid black line) and $x\mapsto (R_{r+(1-p)\lambda}h_C)(x)+d_{i,2}\varphi_{r+(1-p)\lambda}(x)$ (dashes black line). The grey dashed line is located at $x^*=5.144979$. \label{fig1}}
\includegraphics[scale=0.8]{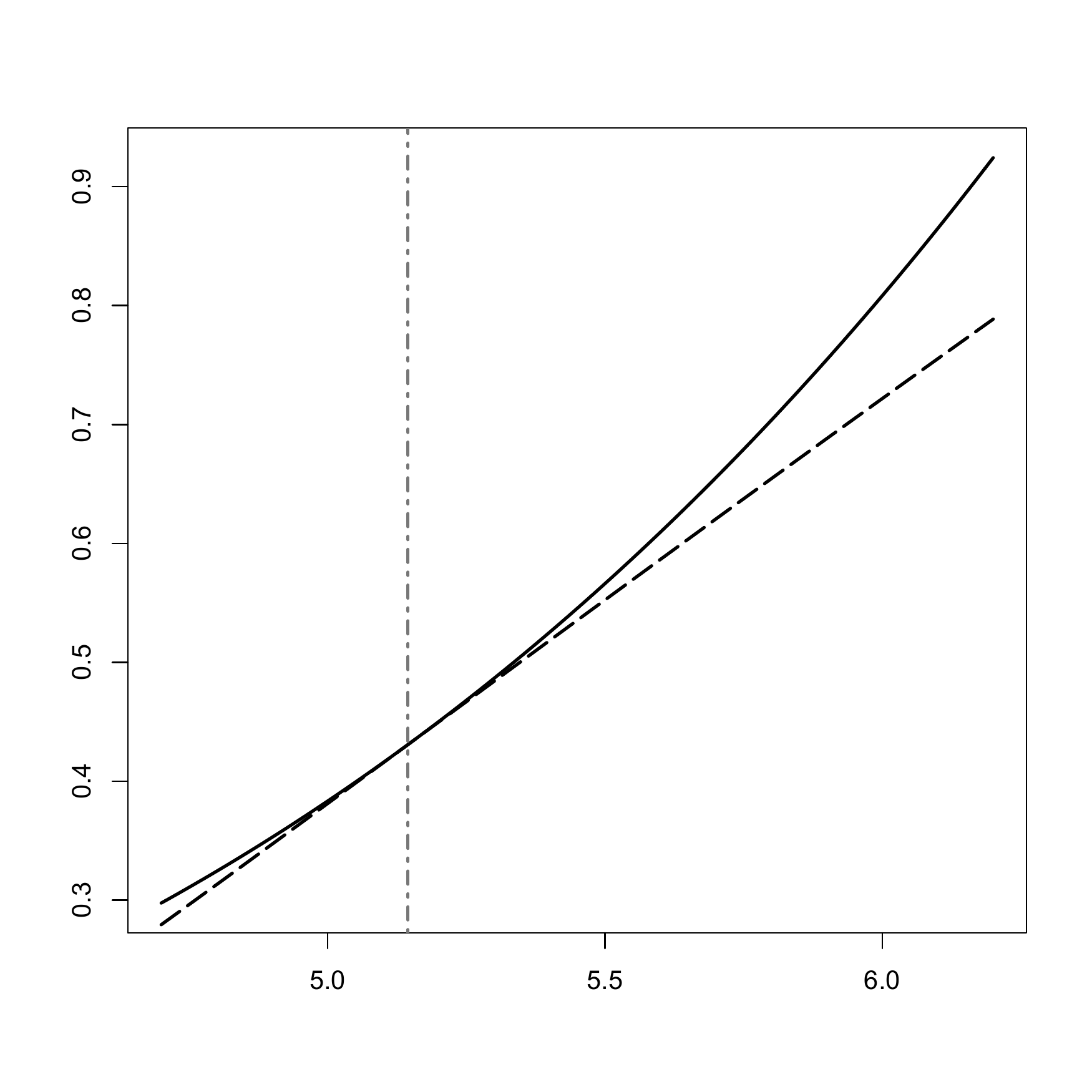}
\end{figure}

In Figure \ref{fig2} we illustrate the effect of parameter $p$ on the value $G_i$ under the parameter configuration $\alpha = 0.05$, $\beta=0.25$, $r=0.1$, and $K=C=\lambda=1$. The values of $p$ are $0.8$, $0.6$, $0.4$, and $0.2$ and the curves are colored such that the hue becomes lighter as the probability $p$ decreases. The figure shows that decreasing $p$ decreases value, this is in line with our general result.

\begin{figure}[ht]
\caption{The value $G_i$ for different values of $p$, $p=0.8,0.6,0.4,0.2$. The hue of the curve becomes lighter as the probability $p$ decreases. The grey dashed line marks the optimal entry threshold $x^*=5.144979$. \label{fig2}}
\includegraphics[scale=0.8]{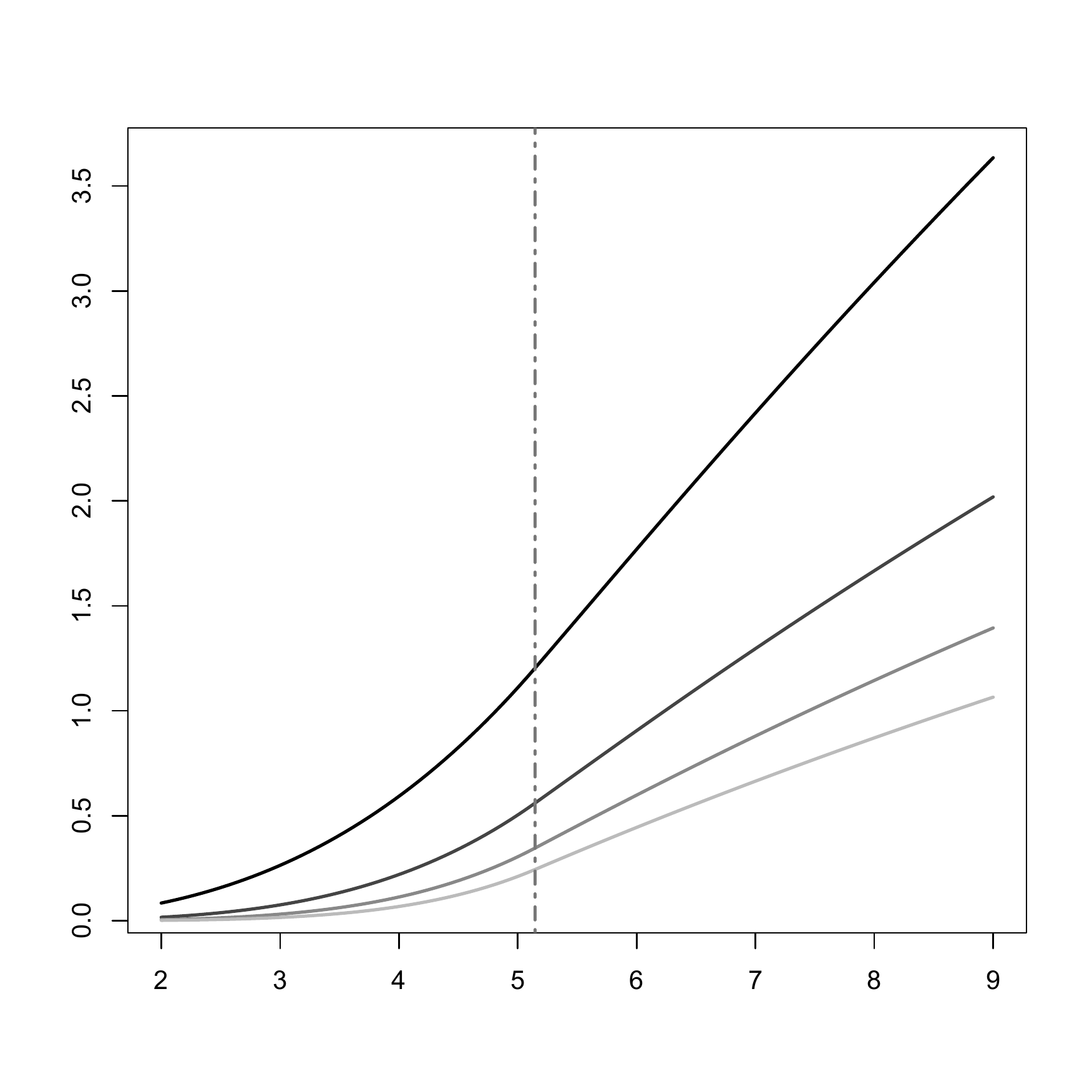}
\end{figure}

\subsection{Logistic Diffusion \cite{DP}} As a generalization of the geometric Brownian motion, we consider the diffusion $X$ with infinitesimal generator 
\[ \mathcal{A} = \frac{1}{2}\beta^2 x^2 \frac{d^2}{dx^2} + \alpha x (1-\gamma x) \frac{d}{dx}, \]
with $\alpha$, $\beta$, and $\gamma$ positive. This process exhibits mean reversion and has been applied successfully in investment theory, see \cite{DP}.  We point out that when $\gamma=0$, this process reduces to a geometric Brownian motion.

A straightforward computation yields the functions $S'$ and $m'$: $S'(x)=x^{-\frac{2\mu}{\sigma^2}}e^{\frac{2\gamma\mu}{\sigma^2}x}$ and $m'(x) =\frac{2}{(\sigma x)^2}x^{\frac{2\mu}{\sigma^2}}e^{-\frac{2\gamma\mu}{\sigma^2}x}$ for all $x\in(0,\infty)$. Furthermore, it is known from the literature that for an arbitrary $\rho>0$, the functions $\psi_\rho$ and $\varphi_\rho$ reads as
\begin{align*}
\psi_\rho(x)&=x^{b(\rho)}M(b(\rho),2b(\rho)+\frac{2\mu}{\sigma^2},\frac{2\mu\gamma}{\sigma^2}x), \\
\psi_\rho(x)&=x^{b(\rho)}U(b(\rho),2b(\rho)+\frac{2\mu}{\sigma^2},\frac{2\mu\gamma}{\sigma^2}x),
\end{align*} 
where $M$ and $U$ are, respectively, the confluent hypergeometric functions of first and second type, see \cite{DK}. Due to the complicated nature of these functions, we resort to numerical solution of the optimal entry threshold $x^*$ and the value function $G_i$.

In Figure \ref{fig3} we illustrate the curves $x \mapsto c_{i,1}\psi_{r+(1-p)\lambda}(x)$ and and $x\mapsto (R_{r+(1-p)\lambda}h_C)(x)+d_{i,2}\varphi_{r+(1-p)\lambda}(x)$ around the optimal entry threshold $x^*$ under the parameter configuration $\alpha = 0.05$, $\beta=0.25$, $r=0.1$, $K=C=\lambda=1$, $\gamma=0.2$, and $p=0.5$. The figure suggests that the curves tangent at $x^*$. 

\begin{figure}[ht]
\caption{The curves $x \mapsto c_{i,1}\psi_{r+(1-p)\lambda}(x)$ (solid black line) and $x\mapsto (R_{r+(1-p)\lambda}h_C)(x)+d_{i,2}\varphi_{r+(1-p)\lambda}(x)$ (dashes black line). The grey dashed line is located at $x^*=5.235711$. \label{fig3}}
\includegraphics[scale=0.8]{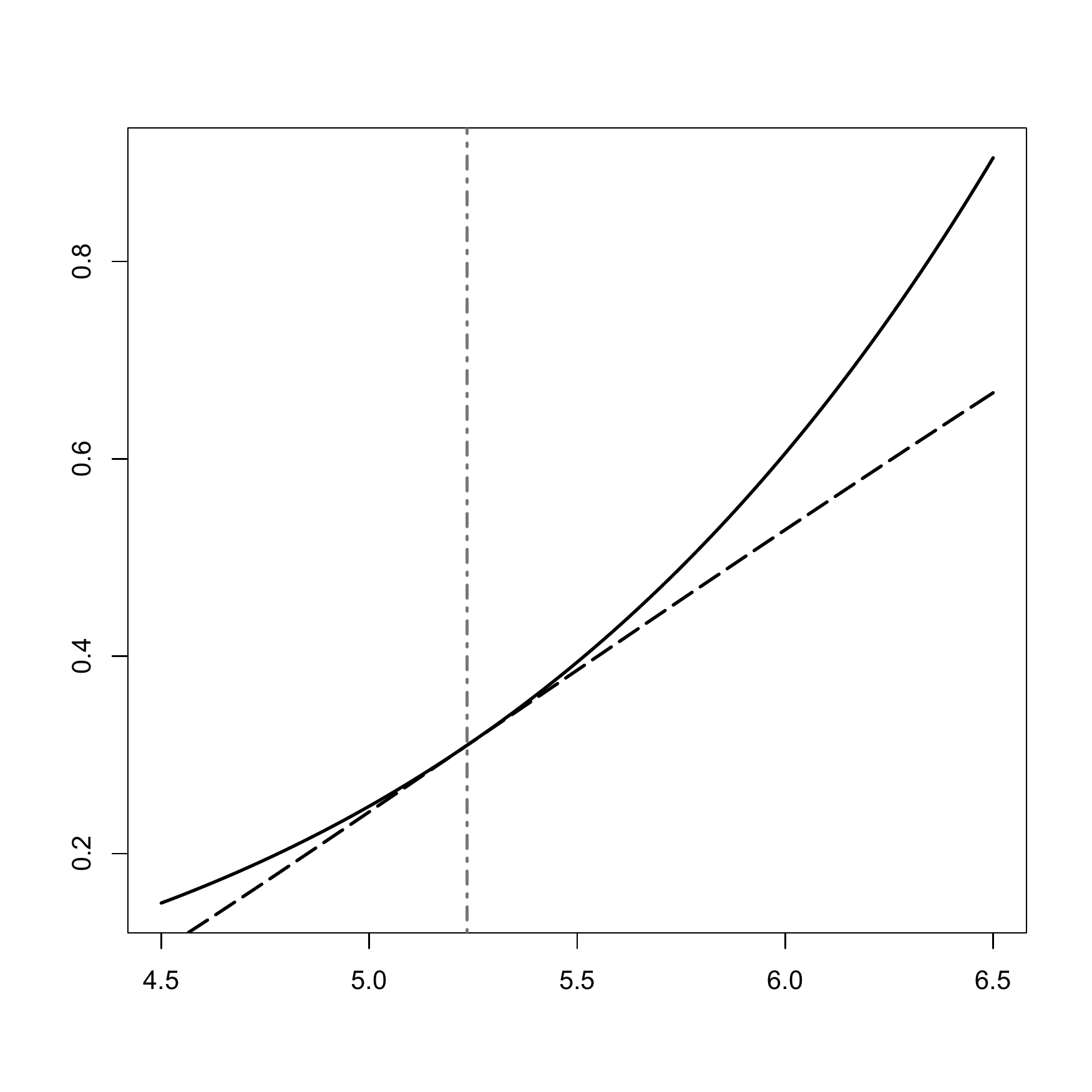}
\end{figure}

In Figure \ref{fig4} we illustrate the effect of parameter $p$ on the value $G_i$ under the parameter configuration $\alpha = 0.05$, $\beta=0.25$, $r=0.1$, $K=C=\lambda=1$, and $\gamma=0.2$. The values of $p$ are $0.8$, $0.6$, $0.4$, and $0.2$ and the curves are colored such that the hue becomes lighter as the probability $p$ decreases. The figure shows that decreasing $p$ decreases value, this is in line with our general result.
 
\begin{figure}[ht]
\caption{The value $G_i$ for different values of $p$, $p=0.8,0.6,0.4,0.2$. The hue of the curve becomes lighter as the probability $p$ decreases. The grey dashed line marks the optimal entry threshold $x^*=5.235711$. \label{fig4}}
\includegraphics[scale=0.8]{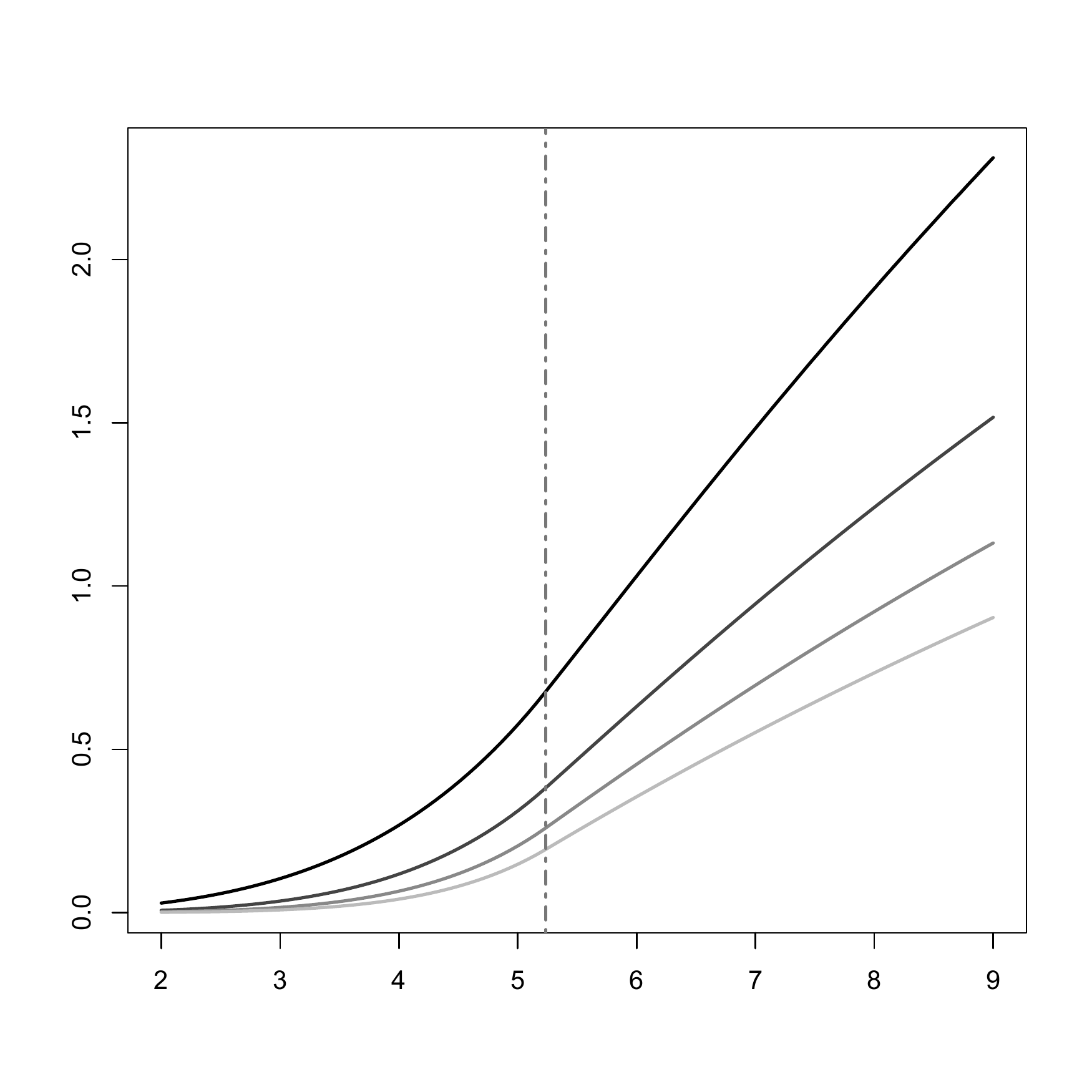}
\end{figure}


\begin{thebibliography}{AO}

\bibitem{BoSa}
Borodin, A. and P. Salminen (2015). \emph{Handbook on Brownian Motion -- Facts and Formul\ae}. 2nd Edition, Birkh\"auser, Basel

\bibitem{BZ}
Bronstein, A. L. and M. Zervos (2006). Sequential entry and exit decisions with an ergodic performance criterion. \emph{Stochastics}, 78/2, 99 -- 121

\bibitem{CW}
Chung, K. L. and J. B. Walsh (2005). \emph{Markov Processes, Brownian Motion and Time Symmetry}. Springer, New York

\bibitem{DK}
Dayanik, S. and I. Karatzas (2003). On the optimal stopping problem for one-dimensional diffusions. \emph{Stochastic Processes and Their Applications}, 107/2, 173 -- 212 

\bibitem{DR}
Diamond, D. W. and R. G. Rajan (2001). Liquidity Risk, Liquidity Creation, and Financial Fragility: A Theory of Banking. \emph{Journal of Political Economy}, 109, 287 -- 327    

\bibitem{Dixit}
Dixit, A. K. (1989). Entry and Exit Decisions under Uncertainty. \emph{Journal of Political Economy}, 97, 620 -- 638

\bibitem{DP}
Dixit, A. K. and R. S. Pindyck (1994). \emph{Investment under Uncertainty}. Princeton University Press

\bibitem{DZ}
Duckworth, J. K. and M. Zervos (2000). An Investment Model with Entry and Exit Decisions. \emph{Journal of Applied Probability}, 37, 547 -- 559.

\bibitem{Lempa2012}
Lempa, J. (2012). Optimal Stopping with Information Constraint. \emph{Applied Mathematics and Optimization}, 66/2, 147 -- 173

\bibitem{Serfozo}
Serfozo, R. (2009). \emph{Basics of Applied Stochastic Processes}. Springer, Berlin

\bibitem{Wang2005}
Wang, H. (2005). A Sequential Entry Problem with Forced Exits. \emph{Mathematics of Operations Research}, 30/2, 501 -- 520

\bibitem{Zervos}
Zervos, M. (2003). A Problem of Sequential Entry and Exit Decisions Combined with Discretionary Stopping. \emph{SIAM Journal on Control and Optimization}, 42/2, 397 -- 421

\end{thebibliography}
\end{document}